\theoremstyle{plain}% default
\newtheorem{thm}{Theorem}[section]
\newtheorem{Thm}{Theorem}
\theoremstyle{definition}
\theoremstyle{remark}
\DeclareMathOperator{\Hom}{Hom}
\DeclareMathOperator{\Endo}{End}
\DeclareMathOperator{\Ext}{Ext}
\DeclareMathOperator{\Rep}{Rep}
\DeclareMathOperator{\N}{\mathbb{N}}
\DeclareMathOperator{\R}{\mathbb{R}}
\DeclareMathOperator{\kdim}{dim}
\title{Universal Deformation Rings and Quivers of Finite Representation Type}
\author{Roberto C. Soto and Daniel J. Wackwitz}
\begin{document}
\maketitle

\begin{abstract}
Let $k$ be a field of arbitrary characteristic and let $Q$ be a quiver of finite representation type.  In this paper we prove that if $M$ is an indecomposable $kQ$-module then the universal deformation ring of $M$ over $kQ$ is isomorphic to $k.$
\end{abstract}
%%%%%%%%%%%%%%%%%%%%%%%%%%%%%%%%%%%%%%%%%%
\section*{Introduction}
%%%%%%%%%%%%%%%%%%%%%%%%%%%%%%%%%%%%%%%%%%

Universal deformations rings of representations of profinite Galois groups have played an important role in proving long-standing problems in number theory.  The most celebrated of these is Fermat's Last Theorem (see \cite{modular1997forms}) but many other results have also been established such as the Taniyama-Shimura-Weil and Serre conjectures (see \cite{bockle2007present, shimura2001taniyama}).  In general, if $k$ is a field of arbitrary characteristic, $W$ is a complete local commutative Noetherian ring with residue field $k,$ $G$ is a profinite group and $V$ is a finite dimensional $k$-vector space with continuous $G$ action, then it has been shown (see \cite{deSmit1997lenstra, mazur1989deforming}) that if $\Endo_{kG} (V) \cong k$ and $H^1(G, \Endo_k(V))$ is finite dimensional over $k,$ that $V$ has a universal deformation ring, $R_W(G, V).$  Moreover, in \cite{deSmit1997lenstra} it was shown that $R_W(G, V)$ is isomorphic to the inverse limit of the universal deformation rings of $R_W(G_i, V),$ where the $G_i$ range over all discrete finite quotients of $G$ through which the action of $G$ on $V$ factors.  As such it is important that we understand universal deformation rings for modules over finite dimensional algebras $kG$, where $G$ is a finite group.  Indeed deformations of modules for finite dimensional algebras have been studied by many authors in different contexts (see \cite{Ile2004change, laudal2002noncom, yau2005deftheory}) and in particular we use the work of Velez in \cite{velez2012frobenius} to prove our result.

In this paper we consider $k$-algebras arising from quivers of finite representation type.  In Section 1 we define universal deformation rings and when they exist.  In Section 2 we give a brief overview of quivers of finite representation type. Finally in Section 3 we discuss Gabriel's Theorem as it is a major result that allows us to prove our theorem below in Section 4.

\begin{Thm} \label{main}
Let $Q$ be a quiver of finite representation type and  $M$  an indecomposable $kQ$-module.  Then the universal deformation ring of $M$ over $kQ$ is isomorphic to $k.$
\end{Thm}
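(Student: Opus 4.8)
The plan is to translate the statement into homological algebra by way of Gabriel's theorem and the hereditary property of path algebras, reducing the whole problem to the vanishing of a single self-extension group. First I would reduce to the case where $Q$ is connected: since $kQ$ is the product of the path algebras of the connected components of $Q$ and $M$ is indecomposable, $M$ is supported on a single component, so we may assume $Q$ is connected. By the version of Gabriel's theorem recalled in Section 3, a connected quiver of finite representation type has underlying graph a Dynkin diagram of type $A_n$, $D_n$, $E_6$, $E_7$ or $E_8$, and the indecomposable $kQ$-modules are in bijection with the positive roots of the associated root system, the bijection sending $M$ to its dimension vector $\underline{\dim}\, M = \alpha$, a positive root satisfying $q(\alpha) = 1$ for the Tits (Euler) quadratic form $q$.

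Next I would exploit that $kQ$ is hereditary (global dimension at most $1$, since the Dynkin underlying graph forces $Q$ to be acyclic), so that $\Ext^i_{kQ}(M, M) = 0$ for all $i \geq 2$ and the Euler form computes
\[
q(\underline{\dim}\, M) = \dim_k \Endo_{kQ}(M) - \dim_k \Ext^1_{kQ}(M, M).
\]
The essential algebraic input is that every indecomposable module over a Dynkin path algebra is a brick, i.e. $\Endo_{kQ}(M) \cong k$. I would derive this in a way tied to the proof of Gabriel's theorem: each indecomposable is obtained from a simple module by iterated Bernstein--Gelfand--Ponomarev reflection functors, which restrict to equivalences preserving $\Endo$ and $\Ext^1$; since $Q$ has no loops the simples $S_i$ satisfy $\Endo_{kQ}(S_i) \cong k$ and $\Ext^1_{kQ}(S_i, S_i) = 0$, and this rigidity propagates. (Alternatively one may cite the standard Auslander--Reiten-theoretic fact, or argue that the unique indecomposable of dimension vector $\alpha$ has a dense orbit in the representation variety, whose codimension is $\dim_k \Ext^1_{kQ}(M,M)$ by Voigt's lemma.) Combining $\dim_k \Endo_{kQ}(M) = 1$ with $q(\alpha) = 1$ in the displayed identity forces $\dim_k \Ext^1_{kQ}(M, M) = 0$, so $M$ is rigid.

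Finally I would feed this into the deformation-theoretic framework set up in Section 1. Since $\Endo_{kQ}(M) \cong k$, the universal deformation ring $R$ of $M$ over $kQ$ exists, by the existence criterion underlying the results of de Smit--Lenstra and Velez, and its cotangent space $\mathfrak{m}_R / \mathfrak{m}_R^2$ is canonically dual to the tangent space $\Ext^1_{kQ}(M, M)$ of the deformation functor. As this group vanishes, the maximal ideal satisfies $\mathfrak{m}_R = \mathfrak{m}_R^2$, whence $\mathfrak{m}_R = 0$ by completeness together with Nakayama's lemma, giving $R \cong k$.

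The step I expect to be the main obstacle is the algebraic crux in the second paragraph: establishing cleanly that indecomposables over a Dynkin quiver are bricks, and hence rigid. Everything else is either a formal reduction (connectedness), a structural fact about hereditary algebras (the Euler form identity), or the deformation-theoretic dictionary already prepared in the earlier sections; the genuine content is that the positive-definiteness of the Tits form on a Dynkin diagram forbids both nontrivial self-maps and self-extensions of an indecomposable, and it is the translation of that rigidity into the vanishing of the cotangent space that yields $R \cong k$.
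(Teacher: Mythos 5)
Your proposal is correct and follows essentially the same route as the paper: Gabriel's theorem in the strong form of Theorem \ref{gabriel} yields $\Endo_{kQ}(M) \cong k$ and $\Ext^1_{kQ}(M,M) = 0$, and the deformation-theoretic setup of Section 1 then forces $R(kQ,M) \cong k$. The differences are only in presentation --- you reprove the brick/rigidity facts (via reflection functors and the hereditary Euler-form identity $q(\underline{\kdim}\, M) = \kdim_k \Endo_{kQ}(M) - \kdim_k \Ext^1_{kQ}(M,M)$) where the paper cites them directly, and you conclude via Nakayama applied to $\mathfrak{m}_R = \mathfrak{m}_R^2$ where the paper invokes the surjection $k[[t_1, \dots, t_r]] \rightarrow R(kQ,M)$ with $r = 0$ from Theorem \ref{UDRform}.
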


%%%%%%%%%%%%%%%%%%%%%%%%%%%%%%%%%%%%%%%%%%
\section{Universal deformation rings}
%%%%%%%%%%%%%%%%%%%%%%%%%%%%%%%%%%%%%%%%%%

Let $k$ be a field of arbitrary characteristic.  Let $\hat{\mathcal{C}}$ be the category which has all complete local commutative Noetherian $k$-algebras with residue field $k$ as objects, and continuous $k$-algebra homomorphisms inducing the identity map on $k$ as morphisms.

Suppose $\Lambda$ is a finite dimensional $k$-algebra, $V$ is a finitely generated $\Lambda$-module and $R$ is an object in $\hat{\mathcal{C}}$.  A \emph{lift} of $V$ over $R$ is a pair $(M,\phi)$ where $M$ is a finitely generated $R \otimes_k \Lambda$-module which is free over $R$ and $\phi$ is a $\Lambda$-module isomorphism $\phi: k \otimes_R M \rightarrow V$. Two lifts $(M,\phi)$ and $(M',\phi')$ of $V$ over $R$ are said to be isomorphic if there exists an $R \otimes_k \Lambda$-module isomorphism $f:M \rightarrow M'$ such that $\phi' \circ (k \otimes_R f) = \phi$.  A \emph{deformation} $[M,\phi]$ of $V$ over $R$ is the isomorphism class of the lift $(M,\phi)$. Define Def$_{\Lambda}(V,R)$ to be the set of all such deformations of $V$ over $R$.  Define the \emph{deformation functor} $F_V:\hat{\mathcal{C}} \rightarrow$ Sets as the covarient functor which sends a ring $R$  in $\hat{\mathcal{C}}$ to Def$_{\Lambda}(V,R)$ and a morphism $\alpha:R \rightarrow R'$ in $\hat{\mathcal{C}}$ to the set map $F_V(\alpha):\text{Def}_{\Lambda}(V,R) \rightarrow \text{Def}_{\Lambda}(V,R')$, which sends $[M,\phi]$ to $[R\otimes_{R,\alpha} M, \phi_{\alpha}]$ where $\phi_{\alpha}$ is the composition $k\otimes_{R'} (R' \otimes_{R,\alpha} M) \cong k \otimes_R M \xrightarrow{\phi} V$. Define the \emph{tangent space} of $F_V$ to be the set $t_V = F_V(k[\epsilon])$ where $k[\epsilon]$ denotes the ring of dual numbers over $k$, ie $\epsilon^2 = 0$.

The functor $F_V$ is said to be represented by a ring $R(\Lambda,V)$ in $\hat{\mathcal{C}}$ if $F_V$ is naturally isomorphic to the functor $Hom_{\Lambda}(R(\Lambda,V),-)$. In other words, there exists a lift $(U(\Lambda,V), \phi_U)$ of $V$ over $R(\Lambda, V)$ such that for any $R$ in $\hat{\mathcal{C}}$, the map $\nu_R: \text{Hom}_{\hat{\mathcal{C}}}(R(\Lambda,V),R) \rightarrow F_V(R)$, which sends $\alpha \in \text{Hom}_{\hat{\mathcal{C}}}(R(\Lambda,V),R)$ to $F_V(\alpha)([U(\Lambda,V),\phi_U])$, is bijective. In this case, $R(\Lambda,V)$ is said to be the \emph{universal deformation ring of $V$ over $R$}.

\begin{thm}
\label{UDRexist}
There is a $k$-vector space isomorphism $t_V \cong \Ext_{\Lambda}^1(V,V)$. Furthermore, when $\Endo_{\Lambda}(V) \cong k$, then $V$ has a universal deformation ring $R(\Lambda, V)$.
\end{thm}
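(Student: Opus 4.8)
The plan is to treat the two assertions in turn: an explicit computation identifying the tangent space with self-extensions, followed by an application of Schlessinger's representability criteria in which the hypothesis $\Endo_\Lambda(V)\cong k$ does the essential work.

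For the tangent space, I would compute $t_V=F_V(k[\epsilon])$ by hand and match it with $\Ext^1$. Given a lift $(M,\phi)$ over $k[\epsilon]$, freeness of $M$ over $k[\epsilon]$ together with $\epsilon^2=0$ forces multiplication by $\epsilon$ to have image and kernel both equal to $\epsilon M$, so that $\epsilon M\cong M/\epsilon M$ as $\Lambda$-modules and, via $\phi$, both are identified with $V$. The canonical sequence
\[
0 \to \epsilon M \to M \to M/\epsilon M \to 0
\]
is then a self-extension of $V$, defining a class in $\Ext_\Lambda^1(V,V)$. Conversely, from a short exact sequence $0\to V\xrightarrow{\iota}E\xrightarrow{\pi}V\to 0$ I would make $E$ into a $k[\epsilon]\otimes_k\Lambda$-module by letting $\epsilon$ act as $\iota\circ\pi$; since $\pi\iota=0$ this action squares to zero, commutes with $\Lambda$ (as $\iota,\pi$ are $\Lambda$-maps), and renders $E$ free over $k[\epsilon]$ of rank $\kdim_k V$. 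I would then verify that these constructions are mutually inverse, that isomorphic lifts correspond exactly to equivalent extensions, and that Baer sum matches the natural addition on deformations; this yields the asserted $k$-linear isomorphism $t_V\cong\Ext_\Lambda^1(V,V)$. Since $\Lambda$ is finite dimensional and $V$ finitely generated, $\Ext_\Lambda^1(V,V)$ — and hence $t_V$ — is finite dimensional over $k$.

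For the existence statement I would verify Schlessinger's conditions for the functor $F_V$. First, $F_V(k)$ is a single point. Conditions (H1) and (H2), which concern the natural map $F_V(A'\times_A A'')\to F_V(A')\times_{F_V(A)}F_V(A'')$ (surjectivity in general, and bijectivity when $A=k$, $A''=k[\epsilon]$), follow formally: a lift over the fiber product $A'\times_A A''$ is the same data as a pair of lifts over $A'$ and $A''$ that agree over $A$, using right-exactness of tensor and the fact that modules free over each factor assemble to a module free over the fiber product. Condition (H3), finite dimensionality of the tangent space, is the computation above. This reduces everything to (H4).

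The main obstacle is (H4): for a small extension $A'\to A$, the comparison map $F_V(A'\times_A A')\to F_V(A')\times_{F_V(A)}F_V(A')$ must be \emph{bijective}, not merely surjective, and this is exactly where $\Endo_\Lambda(V)\cong k$ is needed. The fiber of this map over a pair of lifts is a torsor under the automorphisms of the reduced deformation over $A$ that fail to lift to $A'$; the obstruction is governed by $\Endo_\Lambda(V)$ modulo the scalars $k\cdot\mathrm{id}$, since a scalar automorphism commutes with the $\Lambda$-action and fixes every deformation. Under the hypothesis $\Endo_\Lambda(V)\cong k$ there are no automorphisms beyond scalars, the torsor is trivial, and the gluing of lifts is unique, giving (H4). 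With (H1)--(H4) established, Schlessinger's theorem shows $F_V$ is pro-representable, so $V$ admits a universal deformation ring $R(\Lambda,V)$; this is the finite-dimensional-algebra analogue of the results of Mazur and de Smit--Lenstra, and can also be obtained from Velez's framework.
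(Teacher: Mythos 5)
Your proposal is correct and is essentially the paper's approach: the paper's own ``proof'' is only a citation to Proposition 2.1 of Velez's work on Frobenius algebras, and your argument reconstructs precisely the standard proof given there --- the dictionary between $k[\epsilon]$-lifts and self-extensions for $t_V \cong \Ext^1_\Lambda(V,V)$, followed by Schlessinger's criteria with $\Endo_\Lambda(V)\cong k$ securing (H4). The one step you gloss is the induction on the length of the Artinian ring $A$ showing $\Endo_{A\otimes_k\Lambda}(M)\cong A$ for every lift $M$ (and the passage from pro-representability over Artinian rings to representability on $\hat{\mathcal{C}}$ by continuity of $F_V$), which is what makes your torsor argument for (H4) rigorous.
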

\begin{proof}
See \cite[Proposition 2.1]{velez2012frobenius}.
\end{proof}

\begin{thm}
\label{UDRform}
If $\Endo_{\Lambda}(V) \cong k$ and $\kdim_k \Ext^1_{\Lambda} (V,V)=r,$ then there exists a surjective homomorphism $\lambda: k[[t_1, \cdots, t_r]] \rightarrow R(\Lambda,V)$ in $\hat{\mathcal{C}}$, and $r$ is minimal with this property.
\end{thm}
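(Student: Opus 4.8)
The plan is to translate both assertions—the existence of a surjection from a power series ring in $r$ variables and the minimality of $r$—into statements about the Zariski cotangent space of $R:=R(\Lambda,V)$, and then to invoke the complete (topological) version of Nakayama's lemma. By Theorem \ref{UDRexist}, the hypothesis $\Endo_\Lambda(V)\cong k$ guarantees that $R$ exists and represents the functor $F_V$, and that $t_V\cong\Ext^1_\Lambda(V,V)$ has $k$-dimension $r$. Write $\mathfrak{m}$ for the maximal ideal of $R$.

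The first and most substantive step is to identify the tangent space of $R$ as an object of $\hat{\mathcal{C}}$. Any continuous $k$-algebra homomorphism $R\to k[\epsilon]$ inducing the identity on the residue field must carry $\mathfrak{m}$ into $(\epsilon)$ and hence $\mathfrak{m}^2$ into $(\epsilon)^2=0$; conversely, because $R\cong k\oplus\mathfrak{m}$ as $k$-vector spaces, any $k$-linear map $\mathfrak{m}/\mathfrak{m}^2\to(\epsilon)\cong k$ extends uniquely to such a homomorphism. This produces a natural $k$-linear isomorphism
\[
\Hom_{\hat{\mathcal{C}}}(R,k[\epsilon])\cong\Hom_k(\mathfrak{m}/\mathfrak{m}^2,k).
\]
Composing the representability isomorphism $\Hom_{\hat{\mathcal{C}}}(R,k[\epsilon])\cong F_V(k[\epsilon])=t_V$ with the isomorphism $t_V\cong\Ext^1_\Lambda(V,V)$ of Theorem \ref{UDRexist}, I conclude that $\kdim_k\mathfrak{m}/\mathfrak{m}^2=r$.

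With the cotangent space pinned down, the construction of $\lambda$ becomes routine. Since $R$ is complete local Noetherian, the topological Nakayama lemma lets me lift a $k$-basis of $\mathfrak{m}/\mathfrak{m}^2$ to elements $x_1,\dots,x_r\in\mathfrak{m}$ that generate $\mathfrak{m}$. Sending $t_i\mapsto x_i$ then defines a continuous $k$-algebra homomorphism $\lambda:k[[t_1,\dots,t_r]]\to R$ in $\hat{\mathcal{C}}$, whose image is a closed $k$-subalgebra containing $x_1,\dots,x_r$; as these generate $\mathfrak{m}$, completeness of $R$ forces the image to be all of $R$, so $\lambda$ is surjective. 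For minimality, any surjection $g:k[[s_1,\dots,s_m]]\to R$ in $\hat{\mathcal{C}}$ induces a surjection on cotangent spaces $\mathfrak{n}/\mathfrak{n}^2\to\mathfrak{m}/\mathfrak{m}^2$, where $\mathfrak{n}$ is the maximal ideal of the source; since the former has dimension at most $m$ and the latter has dimension $r$, we must have $m\ge r$, so no power series ring in fewer than $r$ variables can surject onto $R$.

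I expect the genuine obstacle to be confined to the first step: verifying carefully that the functorial tangent space $t_V=F_V(k[\epsilon])$ agrees, through the representability isomorphism, with the $k$-dual of $\mathfrak{m}/\mathfrak{m}^2$, and that this identification is the natural one. Once that bridge is in place, the remainder is a standard application of Nakayama's lemma and the universal property of power series rings in $\hat{\mathcal{C}}$.
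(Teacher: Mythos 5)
Your proof is correct and takes essentially the same route as the paper: both arguments combine the representability isomorphism $t_V \cong \Hom_{\hat{\mathcal{C}}}(R(\Lambda,V), k[\epsilon])$ with the identification $t_V \cong \Ext^1_{\Lambda}(V,V)$ from Theorem \ref{UDRexist} to pin down the cotangent space, from which the surjection and the minimality of $r$ follow. The only difference is one of completeness: you explicitly carry out the standard steps (the isomorphism $\Hom_{\hat{\mathcal{C}}}(R,k[\epsilon]) \cong \Hom_k(\mathfrak{m}/\mathfrak{m}^2,k)$, lifting a basis of $\mathfrak{m}/\mathfrak{m}^2$ via Nakayama's lemma, and the induced surjection on cotangent spaces for minimality) that the paper compresses into the phrase ``thus giving the desired result.''
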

\begin{proof}
Since End$_{\Lambda}(V) \cong k$, $F_V$ is representable by Theorem \ref{UDRexist}.  Therefore $t_V = F_V(k \left[\epsilon \right]) \cong Hom_{\hat{\mathcal{C}}} (R(\Lambda,V), k \left[ \epsilon \right])$. Furthermore, by Theorem \ref{UDRexist}, $t_V \cong Ext^1_{\Lambda} (V,V)$ as a $k$-vector space, thus giving the desired result.
\end{proof}

%%%%%%%%%%%%%%%%%%%%%%%%%%%%%%%%%%%%%%%%%%
\section{Quivers of Finite Representation Type}
%%%%%%%%%%%%%%%%%%%%%%%%%%%%%%%%%%%%%%%%%%

A  quiver is a quadruple $Q = (Q_0, Q_1, s, t),$ where $Q_0$ is a finite set of vertices, $Q_1$ is a finite set of arrows, and $s, t: Q_1 \rightarrow Q_0$ are maps assigning to each arrow it \emph{source}, resp. \emph{target}.  A \emph{representation} $M$ of a quiver $Q$ consists of a family of vector spaces $V_i$ indexed by the vertices $i \in Q_0,$ together with a family of linear maps $f_\alpha: V_{s(\alpha)} \rightarrow V_{t(\alpha)}$ indexed by the arrows $\alpha \in Q_1.$  If $M = (V_i, f_\alpha)$ is a representation of $Q,$ then its \emph{dimension vector} $\underline{n} = \underline{\kdim} M= (\kdim V_i)_{i \in Q_0}.$

Given two representations $M = \left((V_i)_{i \in Q_0}, (f_{\alpha})_{\alpha \in Q_1} \right), N = ((W_i)_{i \in Q_0}, (g_\alpha)_{\alpha \in Q_1})$ of a quiver $Q,$ a morphism $u: M \rightarrow N$ is a family of linear maps $(u_i: V_i \rightarrow W_i)_{i \in Q_0}$ such that for any $\alpha \in Q_1$ we have $u_{t(\alpha)} \circ f_\alpha = g_\alpha \circ u_{s(\alpha)}.$ %INCLUDE COMMUTING DIAGRAM HERE
For a quiver $Q$ and a field $k$ we can form the category $\Rep_k(Q)$ whose objects are representations of $Q$ with the morphisms as defined above.  A morphism $\phi: M = (V_i, f_\alpha) \rightarrow N = (W_i, g_\alpha)$ is an \emph{isomorphism} if $\phi_i$ is invertible for every $i \in Q_0.$  As can be expected, we wish to classify all representations of a given quiver $Q$ up to isomorphism.  

If $M$ and $N$ are two representations of the same quiver $Q,$ we define their \emph{direct sum} $M \oplus N$ by $(M \oplus N)_i = V_i \oplus W_i$ for all $i \in Q_0,$ and $(M \oplus N)_\alpha: V_{s(\alpha)} \oplus W_{s(\alpha)} \rightarrow V_{t(\alpha)} \oplus W_{t(\alpha)}$ for all $\alpha \in Q_1.$   We say that $M$ is a \emph{trivial representation} if $V_i = 0$ for all $i \in Q_0.$  If $M$ is isomorphic to a direct sum $M_1 \oplus M_2,$ where $M_1$ and $M_2$ are nontrivial representations, then $M$ is called \emph{decomposable}.  Otherwise $M$ is called \emph{indecomposable}.  Every representation has a unique decomposition into indecomposable representations, up to isomorphism and permutation of components.  Thus the classification problem reduces to classifying the indecomposable representations.  We say that a quiver is of \emph{finite representation type}, or just \emph{finite type}, if it has only finitely many indecomposable representations; otherwise, it is of \emph{infinite representation type}.  The following theorem classifying the quivers of finite representation type is due to Gabriel (see \cite{gabriel1972finite}, \cite{kraftRiedtmann1985geom}).

\begin{thm}[Gabriel]\label{gabrielweak}
A quiver is of finite representation type if and only if each connected component of its underlying undirected graph is a Dynkin graph of type $A, D,$ or $E,$ shown below:
\end{thm}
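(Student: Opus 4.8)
The plan is to reduce the statement to a classification of certain integral quadratic forms and then to bridge the gap between those forms and the representation theory via reflection functors. Since a quiver is of finite representation type if and only if each of its connected components is, I would assume throughout that $Q$ is connected. The central object is the \emph{Tits form} of the underlying graph,
$$q(\underline{x}) = \sum_{i \in Q_0} x_i^2 - \sum_{\alpha \in Q_1} x_{s(\alpha)} x_{t(\alpha)},$$
which depends only on the underlying undirected graph and not on the orientation of the arrows. The whole argument hinges on the principle, to be established below, that $Q$ is of finite type precisely when $q$ is positive definite.

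First I would carry out the purely combinatorial classification: the connected graphs on which $q$ is positive definite are exactly the Dynkin graphs of type $A$, $D$, $E$. I would prove this by direct computation, checking that the Gram matrix (equivalently the symmetrized Cartan matrix) of each Dynkin graph has all leading principal minors positive, and, conversely, that any connected graph which is not Dynkin contains one of the extended Dynkin (Euclidean) graphs $\tilde{A}_n, \tilde{D}_n, \tilde{E}_6, \tilde{E}_7, \tilde{E}_8$ as a subgraph. On each Euclidean graph one exhibits an explicit nonzero integral vector $\delta$ in the radical of $q$, so that $q$ there is only positive semidefinite; since restricting to a subgraph cannot increase positivity, no non-Dynkin graph has positive definite Tits form.

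Next comes the link to representations. For the easy ``only if'' direction I would show that a non-Dynkin $Q$ has infinitely many indecomposables: passing to the Euclidean subquiver furnished above, the null root $\delta$ yields an infinite family of pairwise non-isomorphic indecomposable representations with dimension vectors $n\delta$ (or a one-parameter family in dimension $\delta$), and each of these extends by zero to an indecomposable representation of $Q$. For the ``if'' direction I would bring in the Bernstein--Gelfand--Ponomarev reflection functors $S_i^+$ and $S_i^-$ attached to a sink, resp.\ source, vertex $i$. These functors carry representations of $Q$ to representations of the quiver $\sigma_i Q$ obtained by reversing all arrows at $i$; on all indecomposables other than the simple module at $i$ they are mutually quasi-inverse bijections, and they act on dimension vectors by the simple reflection $s_i$ of the Weyl group of the root system attached to the graph. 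Composing reflections at a complete sequence of sinks gives the Coxeter functor, and iterating it reduces any indecomposable to a simple module in finitely many steps. This forces the dimension vector of every indecomposable to be a positive root, hence to satisfy $q(\underline{\kdim} M) = 1$. Because a positive definite integral form takes the value $1$ on only finitely many vectors, and because each such dimension vector underlies only finitely many (in fact exactly one) indecomposable, $Q$ has only finitely many indecomposable representations.

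The main obstacle is the homological bookkeeping in the reflection-functor step. One has to verify precisely that $S_i^+$ and $S_i^-$ are quasi-inverse on the subcategories of representations with no summand isomorphic to the simple module at $i$, that they genuinely lower a suitable dimension count, and that the Coxeter functor therefore terminates --- so that every indecomposable really is obtained from a simple by reflections and none escapes the induction. Establishing Gabriel's sharper bijection between positive roots and indecomposables is more delicate still, but for the finiteness statement asserted here one needs only that every dimension vector is a root together with the finiteness of the root system, both of which follow once the reflection functors are under control.
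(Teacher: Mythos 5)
The paper offers no proof of this statement at all: it is quoted as Gabriel's classification with pointers to \cite{gabriel1972finite} and \cite{kraftRiedtmann1985geom}, so there is no internal argument to match, and your proposal should be judged as a reconstruction of the literature. What you sketch is the standard Bernstein--Gelfand--Ponomarev reflection-functor proof, and it is correct in outline: the combinatorial classification of connected graphs with positive definite Tits form as exactly the $A$, $D$, $E$ Dynkin graphs (via principal minors and the radical vector $\delta$ on each Euclidean graph, together with the observation that passing to a subgraph can only increase $q$ on nonnegative vectors), the extension-by-zero of the infinite Euclidean families for the ``only if'' direction, and the Coxeter-functor induction showing every indecomposable over a Dynkin quiver has dimension vector a positive root, of which a positive definite integral form admits only finitely many. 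This is a genuinely different route from the cited sources: Kraft--Riedtmann argue geometrically, via the action of $\prod_i \mathrm{GL}(V_i)$ on the representation variety and an orbit-dimension (Tits) count, which naturally lives over an algebraically closed field, whereas the reflection-functor argument works uniformly over an arbitrary field --- a real advantage here, since the paper assumes only that $k$ is a field of arbitrary characteristic, with no closure hypothesis, and its main theorem needs Gabriel's classification in exactly that generality. Two small points deserve care in your sketch. First, over a finite field a ``one-parameter family in dimension $\delta$'' is a finite set, so for the only-if direction you must lean on the families you mention parenthetically --- indecomposables with dimension vectors $n\delta$ (regular modules in a homogeneous tube, e.g.\ the Jordan-block representations of the Kronecker quiver) or the infinite preprojective component --- both of which exist over any field. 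Second, if your notion of quiver permits loops, the one-loop graph must be included among the forbidden Euclidean graphs (as $\tilde{A}_0$), since it is not a subgraph of any $\tilde{A}_n$ with $n \geq 1$; the Jordan-block family again witnesses infinite type there. Neither point undermines the plan, and you correctly flag the genuine technical burden, namely verifying that $S_i^+$ and $S_i^-$ are quasi-inverse away from the simple at $i$ and that the Coxeter iteration terminates, which is where the real work of the BGP proof lies.
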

\begin{center}
		%%%%%%%%%%%%%%%%%%%%%%%%%%%%%%%%%%%%%%%%%%%%%
		%	 This is ZD_infty
		%%%%%%%%%%%%%%%%%%%%%%%%%%%%%%%%%%%%%%%%%%%%%
\begin{tikzpicture}[scale =1.25]
%\draw [help lines] (-1,-1) grid (10,1);

% Graph A_n
\node at (0, 7) {$A_n:$};
\foreach \x in {1,...,5}
  \node at (\x, 7) {$\bullet$};
  
\foreach \x in {1, 3, 4}
  \draw[-] (\x+0.15, 7) -- (\x+.85, 7);

\draw[dashed] (2.15, 7) -- (2.85, 7);

\node[right] at (5.5, 7) {($n$ vertices, $n \geq 1$)};

% Graph D_n
\node at (0, 6) {$D_n:$};
\foreach \x in {1,...,5}
  \node at (\x, 6) {$\bullet$};
  
\foreach \x in {1, 3, 4}
  \draw[-] (\x+0.15, 6) -- (\x+.85, 6);

\draw[dashed] (2.15, 6) -- (2.85, 6);

\node[right] at (5.5, 6) {($n$ vertices, $n \geq 4$)};

\node at (4, 5) {$\bullet$};
\draw[-]   (4, 5.15)--(4,5.85);

% Graph E_6
\node at (0, 4) {$E_6:$};
\foreach \x in {1,...,5}
  \node at (\x, 4) {$\bullet$};
  
\foreach \x in {1, ..., 4}
  \draw[-] (\x+0.15, 4) -- (\x+.85, 4);

\node at (3, 3) {$\bullet$};
\draw[-]   (3, 3.15)--(3,3.85);

% Graph E_7
\node at (0, 2) {$E_7:$};
\foreach \x in {1,...,6}
  \node at (\x, 2) {$\bullet$};
  
\foreach \x in {1, ..., 5}
  \draw[-] (\x+0.15, 2) -- (\x+.85, 2);

\node at (4, 1) {$\bullet$};
\draw[-]   (4, 1.15)--(4,1.85);

% Graph E_8
\node at (0, 0) {$E_8:$};
\foreach \x in {1,...,7}
  \node at (\x, 0) {$\bullet$};
  
\foreach \x in {1, ..., 6}
  \draw[-] (\x+0.15, 0) -- (\x+.85, 0);

\node at (5, -1) {$\bullet$};
\draw[-]   (5, -0.15)--(5,-0.85);

 \end{tikzpicture}
\end{center}

%%%%%%%%%%%%%%%%%%%%%%%%%%%%%%%%%%%%%%%%%%
\section{Gabriel's Theorem}
%%%%%%%%%%%%%%%%%%%%%%%%%%%%%%%%%%%%%%%%%%

The \emph{algebra of a quiver} $Q$ is the associative algebra $kQ$ determined by the generators $e_i$ and $\alpha,$  where $i \in Q_0$ and $\alpha \in Q_1,$ and the relations
	$$e_i^2 = e_i, e_i e_j = 0 ( i \neq j), e_{t(\alpha)}\alpha = \alpha e_{s(\alpha)} = \alpha.$$  
Note that the category of representations of any quiver $Q$ is equivalent to the category of left $kQ$-modules (see \cite[Theorem III.1.5]{ausreitsmal1997reptheory}).

Recall, if $M$ and $N$ are arbitrary $kQ$-modules we define the groups $\Ext_Q^i (M, N)$ as follows:  first choose a projective resolution
	$$\cdots \longrightarrow P_2 \longrightarrow P_1 \longrightarrow P_0 \longrightarrow M \longrightarrow 0.$$
Then take morphisms to $N$ yielding a complex
	$$\Hom_Q(P_0, N) \longrightarrow \Hom_Q(P_1, N) \longrightarrow \Hom_Q(P_2, N) \longrightarrow \cdots$$
The homology groups of this complex are independent of the choice of a projective resolution of $M;$  the $i$th homology group is denoted $\Ext_Q^i(M, N)$ (see \cite[Section 2.4]{benson1998cohom}).

Note that $\Ext_Q^0(M, N) = \Hom_Q(M, N)$ and recall that $\Ext_Q^1(M, N)$ is the set of equivalence classes of \emph{extensions} of $M$ by $N,$ i.e., of exact sequences of $kQ$-modules 
	$$0 \longrightarrow N \longrightarrow E \longrightarrow M \longrightarrow 0$$
up to isomorphisms that induce the identity maps on $N$ and $M$ (see \cite[Section 2.6]{benson1998cohom}).

% Note that every exact sequence of finite-dimensional representations
%	$$ 0 \longrightarrow M' \longrightarrow M \longrightarrow M'' \longrightarrow 0$$
% satisfies
%	$$\underline{\kdim} M = \underline{\kdim} M' + \underline{\kdim} M''.$$
% We also have that any two isomorphic finite-dimensional representations have the same dimension vector.  Naturally, a central problem of quiver theory is to describe the isomorphism classes of finite-dimensional representations of a prescribed quiver, having a prescribed dimension vector.

The \emph{Euler form} of the quiver $Q$ is the bilinear form $\langle , \rangle_Q$ on $\R^{Q_0}$ given by 
	$$\langle \underline{m}, \underline{n} \rangle_Q = \sum_{i \in Q_0} m_i n_i = \sum_{\alpha \in Q_1} m_{s(\alpha)} n_{t(\alpha)}$$
for any $\underline{m} = (m_i)_{i \in Q_0}$ and $\underline{n} = (n_i)_{i \in Q_0}.$  The quadratic form associated to the Euler form is called the \emph{Tits form} $q_Q$, i.e.
	$$q_Q(\underline{n}) = \langle \underline{n}, \underline{n} \rangle_Q = \sum_{i \in Q_0} n_i^2  - \sum_{\alpha \in Q_1} n_{s(\alpha)} n_{t(\alpha)}.$$  
Note that the Tits form depends only on the underlying undirected graph of $Q$ and determines the graph uniquely (see \cite{brion2012repquiver}).

Finally a representation $M$ of the quiver $Q$ is called a \emph{Schur representation} (also known as a \emph{brick}), if $\Endo_Q(M) \cong k.$  Clearly, any Schur representation is indecomposable, but the converse only holds if the Tits form of $Q$ is positive definite.

Now we obtain a more precise form of Gabriel's Theorem with important consequences for our result.
\begin{thm} \label{gabriel}
Assume that the Tits form $q_Q$ is positive definite.  Then:
\begin{itemize}
	\item[(i)]	Every indecomposable representation is Schur and has no non-zero self-extensions.
	\item[(ii)]	The dimension vectors of the indecomposable representations are exactly those $\underline{n} \in \N^{Q_0}$ such that $q_Q(\underline{n}) = 1.$
	\item[(iii)]	Every indecomposable representation is uniquely determined by its dimension vector, up to isomorphism.
	\item[(iv)]	There are only finitely many isomorphism classes of indecomposable representations of $Q.$
\end{itemize}
\end{thm}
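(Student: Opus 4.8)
The plan is to reduce everything to a single homological identity and then to the statement that indecomposable representations are \emph{bricks}. First I would record that the path algebra $kQ$ is hereditary: every representation $M=(M_i,f_\alpha)$ sits in the standard short exact sequence
$$0 \longrightarrow \bigoplus_{\alpha\in Q_1} P_{t(\alpha)}\otimes_k M_{s(\alpha)} \longrightarrow \bigoplus_{i\in Q_0} P_i\otimes_k M_i \longrightarrow M \longrightarrow 0,$$
where $P_i=kQe_i$ is the indecomposable projective at $i$, so that $\Ext^{\geq 2}_Q$ vanishes. Applying $\Hom_Q(-,N)$ and using $\Hom_Q(P_i,N)\cong N_i$ turns this into a two-term complex whose Euler characteristic gives, for all finite-dimensional $M,N$,
$$\langle \underline{\dim}\,M,\underline{\dim}\,N\rangle_Q=\dim_k\Hom_Q(M,N)-\dim_k\Ext^1_Q(M,N).$$
Taking $N=M$ yields the master identity $q_Q(\underline{\dim}\,M)=\dim_k\Endo_Q(M)-\dim_k\Ext^1_Q(M,M)$, which I would use throughout.

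The key reduction is the following. Suppose $M$ is indecomposable and a brick, i.e. $\Endo_Q(M)\cong k$. Since $\underline{\dim}\,M\neq 0$ and $q_Q$ is a positive-definite integral form, $q_Q(\underline{\dim}\,M)$ is a positive integer, hence $\geq 1$; the master identity then reads $1-\dim_k\Ext^1_Q(M,M)\geq 1$, forcing $\Ext^1_Q(M,M)=0$ and $q_Q(\underline{\dim}\,M)=1$. Thus proving that \emph{every indecomposable is a brick} immediately gives all of (i) together with the implication ``indecomposable $\Rightarrow$ $q_Q=1$'' in (ii). So the substance of the theorem is this brick property.

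To prove it I would use the Bernstein--Gelfand--Ponomarev reflection functors $\Phi_i^{\pm}$ at sinks and sources of $Q$: these induce mutually inverse equivalences between the full subcategories of $\Rep_k(Q)$ and $\Rep_k(s_iQ)$ obtained by discarding the simple $S_i$, they preserve indecomposability and the dimensions of $\Endo$ and $\Ext^1$, and on dimension vectors they act by the simple reflection $s_i$ in the Weyl group $W$ of the root system $\{\underline{n}:q_Q(\underline{n})=1\}$. Because $q_Q$ is positive definite this root system is finite, and for any indecomposable $M$ a suitable composite of reflection functors (an admissible ordering of the vertices, i.e. the Coxeter functor) carries $\underline{\dim}\,M$ to a simple root and hence $M$ to some simple module $S_j$. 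Since a Dynkin graph has no loops, $S_j$ satisfies $\Endo_Q(S_j)\cong k$ and $\Ext^1_Q(S_j,S_j)=0$, and transporting back along the functors gives the brick property for $M$. I expect this combinatorial reduction -- showing that iterating reflections eventually pushes any positive dimension vector out of the positive cone, which fails precisely when $q_Q$ is not positive definite -- to be the main obstacle.

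Finally I would assemble (ii)--(iv). For existence in (ii): by Theorem \ref{gabrielweak} positive-definiteness of $q_Q$ makes $Q$ of finite representation type, so for each $\underline{n}$ the affine (hence irreducible) variety $R(Q,\underline{n})$ of representations carries only finitely many $G$-orbits under $G=\prod_i GL_{n_i}(k)$ and thus a unique dense orbit, with representative $M_0$; the orbit-dimension count $\dim O_{M_0}=\dim G-\dim_k\Endo_Q(M_0)$ combined with the master identity gives $\mathrm{codim}\,O_{M_0}=\dim_k\Ext^1_Q(M_0,M_0)$, so density forces $\Ext^1_Q(M_0,M_0)=0$ and, when $q_Q(\underline{n})=1$, $\dim_k\Endo_Q(M_0)=1$, making $M_0$ an indecomposable of dimension vector $\underline{n}$. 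For (iii), two indecomposables with the same dimension vector are bricks by the brick property, hence rigid, hence each occupies the unique dense orbit, so they are isomorphic. For (iv), by (ii) the dimension vectors of indecomposables lie among the finitely many roots of the positive-definite form $q_Q$, and by (iii) each is realized at most once, giving finiteness.
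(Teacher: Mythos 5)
Your proposal is essentially correct in outline, but it takes a genuinely different route from the paper's. The paper gives no argument of its own: its proof of Theorem \ref{gabriel} is the citation to \cite[Theorem 2.4.3]{brion2012repquiver}, and the argument there is purely geometric --- the same Euler-form identity $\langle \underline{\kdim}\, M, \underline{\kdim}\, N \rangle_Q = \kdim_k \Hom_Q(M,N) - \kdim_k \Ext^1_Q(M,N)$ that you derive from the standard resolution, combined with the representation variety $R(Q,\underline{n})$ under $G = \prod_i GL_{n_i}(k)$, the orbit codimension formula, and degenerations along non-split extensions; no reflection functors appear. You instead prove the crucial brick property by the Bernstein--Gelfand--Ponomarev route, transporting an indecomposable to a simple module via reflection functors, and only then switch to orbit geometry for (ii)--(iv). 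Both routes are classical and sound; the BGP argument buys a constructive correspondence between indecomposables and positive roots (and would hand you existence in (ii) for free, by transporting simples backwards through inverse reflection functors), while the geometric argument is shorter and uniform. Two small points you should make explicit: positive definiteness of $q_Q$ forces the underlying graph to have no loops or cycles, so $Q$ is acyclic --- needed both for $kQ$ to be finite dimensional with the standard resolution as you state it and for admissible sink/source orderings to exist; and your claim that reflection functors preserve $\kdim_k \Ext^1$ is not immediate (an equivalence of full subcategories need not preserve extensions computed in the ambient abelian category), but it is also not needed, since preservation of $\Endo$, Weyl-invariance of $q_Q$, and your master identity already give it.

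The one step you must repair is the existence half of (ii). You invoke Theorem \ref{gabrielweak} to conclude that $R(Q,\underline{n})$ has finitely many orbits and hence a dense one. But the direction of \ref{gabrielweak} you are using (positive definite, hence Dynkin, hence finite representation type) is classically established by the very theorem you are proving, so as a self-contained argument this is circular --- and if one really grants \ref{gabrielweak} as a black box, then (iv) is immediate and much of your work is moot. The repair is available inside your own framework: take $M_0$ in $R(Q,\underline{n})$ with $\kdim O_{M_0}$ maximal; if $M_0 \cong M_1 \oplus M_2$ nontrivially, then a non-split extension of $M_1$ by $M_2$ degenerates to $M_0$ and would have a strictly larger orbit, so $\Ext^1_Q(M_1,M_2) = 0 = \Ext^1_Q(M_2,M_1)$, and then bilinearity of the Euler form gives $1 = q_Q(\underline{n}) = q_Q(\underline{n}_1) + q_Q(\underline{n}_2) + \kdim_k \Hom_Q(M_1,M_2) + \kdim_k \Hom_Q(M_2,M_1) \geq 2$, a contradiction; so $M_0$ is indecomposable with $\underline{\kdim}\, M_0 = \underline{n}$. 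Alternatively, realize each positive root by applying a composite of inverse reflection functors to a simple module, which is how BGP themselves obtain existence. With either fix, your arguments for (iii) and (iv) go through exactly as written.
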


\begin{proof}
See \cite[Theorem 2.4.3]{brion2012repquiver}.
\end{proof}

Note that, as a result of \cite[Proposition 1.4.6]{brion2012repquiver}, the quivers $Q$ with positive definite Tits form are exactly the quivers which are of finite representation type. In particular we have that if $Q$ is a quiver of finite representation type and $M$ is an indecomposable representation of the quiver $Q$, i.e. an indecomposable $kQ$-module, then $\Endo_Q(M) \cong k$ and $\Ext_Q^1(M, M) = 0.$ 

%%%%%%%%%%%%%%%%%%%%%%%%%%%%%%%%%%%%%%%%%%
\section{Proof of Theorem \ref{main}}
%%%%%%%%%%%%%%%%%%%%%%%%%%%%%%%%%%%%%%%%%%

Now we can prove our result.  Assume the hypotheses of Theorem \ref{main}.
\begin{proof}
By Theorem \ref{gabriel} we have that $\Endo_{Q}(M) \cong k$ and that $\Ext_Q^1(M, M) = 0.$ Thus by Theorem \ref{UDRexist} a universal deformation ring exists and by Theorem \ref{UDRform} we have a surjective map $\lambda: k \rightarrow R(kQ,M).$  Since  $\ker \lambda = 0$ we obtain our isomorphism.
\end{proof}

% \nocite{*}
\bibliographystyle{amsplain}
\bibliography{References} %NO SPACES WHEN SAVING BIB FILE
% CITE KEYS

% ausreitsmal1997reptheory
% benson1998cohom
% brion2012repquiver
% bockle2007present
% deSmit1997lenstra
% gabriel1972finite
% Ile2004change
% laudal2002noncom
% kraftRiedtmann1985geom
% mazur1989deforming
% modular1997forms
% schiffler2014quiverreps
% shimura2001taniyama
% velez2012frobenius
% yau2005deftheory
    
\end{document}